\newtheoremstyle{custom}
{12pt} {12pt} {} {} {\bfseries} {:} {.5em} {}
\theoremstyle{custom}
\newtheorem*{prob}{Problem}
\newtheorem*{thrm}{Theorem}
\newcounter{count}
\declaretheorem[numberlike=count]{Theorem}
\declaretheorem[numberlike=count]{Lemma}
\declaretheorem[numberlike=count]{Corollary}
\declaretheorem[numberlike=count]{Proposition}
\newcommand{\lr}[1]{\left(#1\right)}
\newcommand{\NN}[0]{\mathbb N}
\newcommand{\ZZ}[0]{\mathbb Z}
\newcommand{\CC}[0]{\mathbb C}
\newcommand{\FF}[0]{\mathbb F}
\newcommand{\one}[0]{\textbf{1}}
\renewcommand{\hat}[1]{\widehat{#1}}
\renewcommand{\bar}[1]{\overline{#1}}
\newcommand{\eps}[0]{\varepsilon}
\newcommand{\disc}[0]{\text{disc}}
\renewcommand{\mod}[0]{\text{ mod }}
\begin{document}


\title{Capturing Forms in Dense Subsets of Finite Fields}

\author{Brandon Hanson\thanks{The author is supported in part by NSERC of
Canada.}\\
  University of Toronto\\
  \texttt{bhanson@math.toronto.edu}
}

\date{}
\maketitle

\begin{abstract}
An open problem of arithmetic Ramsey theory asks if given a finite $r$-colouring
$c:\NN\to\{1,\ldots,r\}$ of the natural numbers, there exist $x,y\in \NN$ such
that $c(xy)=c(x+y)$ apart from the trivial solution $x=y=2$. More generally, one
could replace $x+y$ with a binary linear form and $xy$ with a binary quadratic
form. In this paper we examine the analogous problem in a finite field $\FF_q$. 
Specifically, given a linear form $L$ and a quadratic from $Q$ in two variables, 
we provide estimates on the necessary size of $A\subset \FF_q$ to guarantee that
$L(x,y)$ and $Q(x,y)$ are elements of $A$ for some $x,y\in\FF_q$.
\end{abstract}
\section{Introduction}
In this paper we consider a finite field analog of the following open problem in
arithmetic Ramsey theory \cite{Hindman}.
\begin{prob}
For any $r$-colouring
$c:\NN\to \{1,\ldots,r\}$ of the natural numbers, is it possible to solve
$c(x+y)=c(xy)$ apart from the trivial solution $(x,y)=(2,2)$?
\end{prob}
One might suspect that in fact a stronger result might hold, namely that any
sufficiently dense set of natural numbers contains the elements $x+y$ and $xy$ for some $x$
and $y$. This would immediately solve the problem since one of the colours
in any finite colouring must be sufficiently dense. Such a result is impossible
however, since the odd numbers provide a counter example and are fairly dense
in many senses of the word. Fortunately, this simple parity obstruction
disappears in the finite field setting. Indeed, in \cite{Shkredov}, the
following was proved.\footnote{The author would like to thank J.
Solymosi for bringing this result to his attention.}

\begin{thrm}
Let $p$ be a prime number, and $A_1, A_2, A_3\subset \FF_p$ be any sets,
$|A_1||A_2||A_3| \geq 40p^\frac{5}{2}$. Then there are $x, y\in\FF_p$ such that
$x + y \in A_1$, $xy \in A_2$ and $x \in A_3$.
\end{thrm}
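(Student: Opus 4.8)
\medskip
\noindent\textbf{Proof proposal.}
The plan is to estimate, by Fourier analysis on $\FF_p$, the number of ordered pairs $N:=\#\{(x,y)\in\FF_p^2 : x+y\in A_1,\ xy\in A_2,\ x\in A_3\}$ and to show $N>0$ whenever $a_1a_2a_3\ge 40p^{5/2}$, where $a_i:=|A_i|$. (Observe that this hypothesis forces $a_3\ge 40\sqrt p$, since $a_1a_2\le p^2$, and more generally forces every product $a_ia_j$ to exceed $p$; this is exactly what keeps the lower-order terms below harmless.) Writing $e_p(t)=e^{2\pi i t/p}$ and $\hat{\one}_B(\xi)=\sum_t\one_B(t)e_p(-\xi t)$, and using that $x\in A_3$ together with $x+y=s$ determine $(x,y)$ with $xy=x(s-x)$, I would start from $N=\sum_{x\in A_3}\sum_{s\in\FF_p}\one_{A_1}(s)\one_{A_2}\bigl(x(s-x)\bigr)$. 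Expanding $\one_{A_1},\one_{A_2}$ by Fourier inversion and summing over $s\in\FF_p$ (which forces the $A_1$-frequency to equal $-\beta x$) yields the exact identity
\[ N=\frac{a_1a_2a_3}{p}+\mathrm{Err},\qquad \mathrm{Err}=\frac1p\sum_{\beta\ne0}\hat{\one}_{A_2}(\beta)\,S(\beta),\qquad S(\beta)=\sum_{x\in A_3}e_p(-\beta x^2)\,\hat{\one}_{A_1}(-\beta x), \]
so it suffices to prove $|\mathrm{Err}|<a_1a_2a_3/p$. (The $x=0$ summand of $S(\beta)$ is the constant $a_1$, contributing only $O(a_1)$ to $\mathrm{Err}$, which is dominated by $a_1a_2a_3/p$ because $a_2a_3\ge 40p^{3/2}$.)

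By Cauchy--Schwarz in $\beta$ and Parseval, $|\mathrm{Err}|^2\le\frac1{p^2}\bigl(\sum_{\beta\ne0}|\hat{\one}_{A_2}(\beta)|^2\bigr)\bigl(\sum_{\beta\ne0}|S(\beta)|^2\bigr)\le\frac{a_2}{p}\sum_{\beta\ne0}|S(\beta)|^2$. Opening the square in $S(\beta)=\sum_{x\in A_3,\,s\in A_1}e_p\bigl(\beta x(s-x)\bigr)$ and summing the resulting geometric series over all $\beta$ converts the last factor into a fourth-moment count,
\[ \sum_{\beta\ne0}|S(\beta)|^2=pM-(a_1a_3)^2,\qquad M=\#\{(x,s,x',s')\in A_3\times A_1\times A_3\times A_1 : x(s-x)=x'(s'-x')\}, \]
and the whole problem reduces to the estimate $pM-(a_1a_3)^2\ll a_1a_3\,p^{3/2}$.

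This estimate on $M$ is the crux. After discarding the $O(a_1^2+a_1a_3)$ tuples with $x=0$, $x'=0$ or $x'=\pm x$ (negligible, again because $a_3\ge 40\sqrt p$), the constraint reads $xs-x's'=x^2-x'^2$; for each of the at most $a_3^2$ remaining pairs $(x,x')$ this is a line in $\FF_p^2$, distinct pairs give distinct lines, and so $M$ (up to the discarded part) is the number of incidences between the point set $A_1\times A_1$, of size $a_1^2$, and a set of at most $a_3^2$ lines. The standard point--line incidence bound over $\FF_p$ (the expander-mixing lemma for the incidence graph, itself a Parseval computation) bounds this by $\frac{a_1^2a_3^2}{p}+\bigl(p\,a_1^2a_3^2\bigr)^{1/2}=\frac{a_1^2a_3^2}{p}+a_1a_3\sqrt p$; since $a_1a_3\ge 40p^{3/2}$ the first term dominates, whence $M\le\frac{a_1^2a_3^2}{p}+a_1a_3\sqrt p+O(a_1^2+a_1a_3)$ and $pM-(a_1a_3)^2\ll a_1a_3p^{3/2}$.

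Combining the last two steps, $|\mathrm{Err}|\ll\bigl(\tfrac{a_2}{p}\cdot a_1a_3p^{3/2}\bigr)^{1/2}=p^{1/4}(a_1a_2a_3)^{1/2}$, so $N\ge\frac{a_1a_2a_3}{p}-O\bigl(p^{1/4}(a_1a_2a_3)^{1/2}\bigr)>0$ as soon as $a_1a_2a_3$ exceeds an absolute constant times $p^{5/2}$; a careful accounting of the constants (in the incidence bound and in the discarded terms) is expected to produce the stated value $40$. The only substantive step is the bound on $M$: one must beat the trivial $M\le a_1a_3\min(a_1,a_3)$ — which alone would give the far weaker threshold of order $p^3$ — by the factor $\sqrt p$ supplied by the incidence (equivalently, bilinear character-sum) input; everything else is routine Fourier analysis together with the bookkeeping of lower-order terms, whose smallness rests on the hypothesis forcing $a_3\ge 40\sqrt p$ and $a_ia_j>p$.
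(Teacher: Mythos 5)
This theorem is not proved in the paper at all: it is quoted verbatim from \cite{Shkredov} (the paper only uses it as motivation), so there is no internal proof to compare against, and your argument has to stand on its own — which it does. Your route (Fourier expansion of $\one_{A_1},\one_{A_2}$, Cauchy--Schwarz to isolate $\sum_{\beta\neq 0}|S(\beta)|^2$, conversion of that second moment into the count $M$ of solutions of $x(s-x)=x'(s'-x')$, and then the Vinh/expander-mixing point--line incidence bound $I\le \frac{PL}{p}+\sqrt{pPL}$ applied to the points $A_1\times A_1$ and the lines $xs-x's'=x^2-x'^2$) is sound: the identity $\sum_{\beta\neq0}|S(\beta)|^2=pM-(a_1a_3)^2$ is exact, the main term of the incidence bound cancels $(a_1a_3)^2$ exactly, and your exclusions $x=0$, $x'=0$, $x'=\pm x$ are precisely what is needed both to bound the degenerate tuples by $O(a_1^2+a_1a_3)$ and to guarantee that distinct pairs $(x,x')$ give distinct lines (you verify the latter correctly via the proportionality argument). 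The lower-order terms are indeed absorbed using $a_i\ge 40\sqrt p$ and $a_ia_j\ge 40p^{3/2}$, and a short computation shows your method actually yields a threshold of roughly $2p^{5/2}$, so the stated constant $40$ is met with plenty of room; the only thing left implicit is this explicit bookkeeping, which is routine. Shkredov's original argument is likewise Fourier-analytic, resting on complete character/exponential-sum estimates rather than being packaged through an incidence theorem, but at this scale the two inputs are essentially equivalent (Vinh's bound is itself a Parseval computation, as you note), so your proposal is best described as a correct independent proof of the quoted result rather than a reconstruction of anything in this paper.
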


Now, let $q=p^n$ be an odd prime power and $\FF_q$ a finite field of order $q$.
Given a binary linear form $L(X,Y)$ and a binary quadratic form $Q(X,Y)$, define
$N_q(L,Q)$ to be the smallest integer $k$ such that for any subset $A\subset
\FF_q$ with $|A|\geq k,$ there exists $(x,y)\in\FF_q^2$ with $L(x,y),Q(x,y)\in
A$. In this paper we give estimates on the size of $N_q(L,Q)$. Namely, we will
prove the following theorem.

\begin{Theorem}
\label{thm1}
Let $\FF_q$ be a finite field of odd order. Let $Q\in\FF_q[X,Y]$ be a
binary quadratic form with non-zero discriminant and let
$L\in\FF_q[X,Y]$ be a binary linear form not dividing $Q$. Then we have
\[\log q\ll N_q(L,Q)\ll\sqrt q.\]
\end{Theorem}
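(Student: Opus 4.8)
The plan is to normalize the forms, recast the problem as a statement about the quadratic character $\chi$ of $\FF_q$, and then prove the two bounds separately: the upper bound by a Gauss-sum estimate, the lower bound by a greedy construction whose inductive step is controlled by the Weil bound for multiplicative character sums.

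\textbf{Normalization and reformulation.} Since $L$ is a nonzero linear form, an invertible linear substitution of $(X,Y)$ (which does not change $N_q$) lets me assume $L(X,Y)=X$; this replaces $Q$ by another binary quadratic form whose discriminant differs by a nonzero square and which is still not divisible by $L$, so I may take $Q(X,Y)=\alpha X^2+\beta XY+\gamma Y^2$ with $\gamma\neq0$ (this is $L\nmid Q$) and $\Delta:=\beta^2-4\alpha\gamma\neq0$ (nonzero discriminant). For $u,v\in\FF_q$ the system $L(x,y)=u$, $Q(x,y)=v$ has a solution exactly when the quadratic $\gamma y^2+\beta u y+(\alpha u^2-v)$ has a root in $\FF_q$, i.e.\ — since $\gamma\neq0$ and $q$ is odd — exactly when $\Delta u^2+4\gamma v$ is a square of $\FF_q$ (the value $0$ included). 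Thus $A$ avoids $(L,Q)$ if and only if $\chi(\Delta u^2+4\gamma v)=-1$ for all $u,v\in A$, and $A$ captures $(L,Q)$ if and only if $\chi(\Delta u^2+4\gamma v)\neq-1$ for some $u,v\in A$.

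\textbf{Upper bound.} Completing the square, for each fixed $x$ the value $Q(x,y)$ runs over $\{\gamma t^2+\Delta' x^2:t\in\FF_q\}$ with $\Delta'\neq0$ as $y$ runs over $\FF_q$, so it suffices to show that the number of triples $(x,t,v)\in A\times\FF_q\times A$ with $v=\gamma t^2+\Delta' x^2$ is positive. Expanding the indicator of this equation in the additive characters $\psi$ of $\FF_q$, the trivial $\psi$ contributes the main term $|A|^2$; for nontrivial $\psi$ the sum over $t$ is a Gauss sum of absolute value $\sqrt q$, and Cauchy--Schwarz together with the Parseval identities $\sum_\psi\bigl|\sum_{v\in A}\psi(v)\bigr|^2=q|A|$ and $\sum_\psi\bigl|\sum_{x\in A}\psi(-\Delta' x^2)\bigr|^2=q\,\#\{(x_1,x_2)\in A^2:x_1^2=x_2^2\}\le 2q|A|$ bounds the error by $\sqrt2\,\sqrt q\,|A|$. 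Hence the count is at least $|A|(|A|-\sqrt2\sqrt q)$, positive once $|A|>\sqrt2\sqrt q$, so $N_q(L,Q)\le\lfloor\sqrt2\sqrt q\rfloor+1\ll\sqrt q$.

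\textbf{Lower bound.} I build an avoiding set $\{a_1,a_2,\dots\}$ greedily. Given $a_1,\dots,a_k$ with $\chi(\Delta a_i^2+4\gamma a_j)=-1$ for all $i,j\le k$, adjoining $a_{k+1}=z$ preserves avoidance exactly when $z$ satisfies the $2k+1$ conditions $\chi(4\gamma z+\Delta a_i^2)=-1$, $\chi(\Delta z^2+4\gamma a_j)=-1$ ($i,j\le k$) and $\chi(\Delta z^2+4\gamma z)=-1$. I count such $z$ in the usual way: expand $2^{-(2k+1)}\sum_z\prod(1-\chi(f_i(z)))$ over subsets $S$ of the $2k+1$ polynomials $f_i$, and bound each $|\sum_z\chi(\prod_{i\in S}f_i(z))|$ by $O(\deg)\sqrt q$ via Weil, which is legitimate provided no nonempty product $\prod_{i\in S}f_i$ is a constant times a square in $\FF_q[z]$. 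This is secured by imposing on the $a_i$ a short explicit list of polynomial inequalities of bounded degree — the $a_i$ nonzero and pairwise distinct up to sign, none equal to the special values $\pm4\gamma/\Delta$, and no relation of the shape $a_j=-\Delta^3 a_i^4/(64\gamma^3)$ — which makes the $2k+1$ polynomials squarefree and pairwise coprime, whence every nonempty subproduct is squarefree of positive degree and so not a scalar square. Adjoining $a_{k+1}$ re-imposes only the $O(k)$ such inequalities mentioning the new index, each excluding $O(1)$ values of $z$, so the number of admissible $z$ is at least $q\,2^{-(2k+1)}-O(k\sqrt q)-O(k)$, which stays positive as long as $k\ll\log q$. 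Therefore an avoiding set of size $\gg\log q$ exists and $N_q(L,Q)\gg\log q$.

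\textbf{Main obstacle.} The delicate point is the bookkeeping in the lower bound: one must pin down a genericity list for $a_1,\dots,a_k$ that is simultaneously \emph{maintainable} (only $O(1)$ excluded candidates per constraint, only $O(k)$ new constraints per step) and \emph{strong enough} that for every nonempty $S$ the product $\prod_{i\in S}f_i$ fails to be a scalar square, so that the Weil error term can be beaten up to $k\asymp\log q$. The other ingredients — the change of variables, the Gauss-sum/Cauchy--Schwarz computation, and the final comparison of $q/2^{2k+1}$ with $k\sqrt q$ — are routine.
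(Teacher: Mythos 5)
Your proposal is correct, and both halves take genuinely different routes from the paper. For the upper bound, the paper expands the count $\sum_{a,b\in A}\bigl(1+\chi(Da^2+4rb)\bigr)$ with the quadratic character and controls the error by a Vinogradov-type double character sum bound (Cauchy--Schwarz plus the change of variables $(ua_2^2+vb)^{-1}\mapsto b$), giving $N_q\le 2\sqrt q+1$; you instead count the triples $(x,t,v)$ with $v=\gamma t^2+\Delta'x^2$ via additive characters, using the Gauss-sum modulus $\sqrt q$ and Parseval, arriving at the slightly better threshold $\sqrt2\,\sqrt q$ --- an equally valid and arguably more elementary computation. For the lower bound, the paper phrases the construction as finding a clique in a graph on $\{a:\chi(Da^2+4ra)=-1\}$: at each step it selects a vertex with at least an eighth of the current neighbourhood as neighbours, the key estimate being $\sum_{a\in A,b\in B}X_a(b)=\tfrac{|A||B|}{4}+O(|A||B|^{1/2}q^{1/4})$, whose proof only ever applies Weil to a fixed degree-six polynomial in $b$ and disposes of degenerate (square) cases wholesale as $O(|A|)$ exceptional pairs; the iteration runs while the shrinking neighbourhood exceeds $c\sqrt q$, yielding $\gg\log q$ steps. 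You instead add one element at a time, imposing all $2k+1$ character conditions simultaneously and applying Weil to products of up to $2k+1$ constraint polynomials, with the main term $q2^{-(2k+1)}$ decaying geometrically; this requires your explicit genericity list (nonzero, distinct up to sign, avoiding $\pm4\gamma/\Delta$ and the relation $a_j=-\Delta^3a_i^4/(64\gamma^3)$) to guarantee squarefree, pairwise coprime factors so that no subproduct is a scalar square --- I checked that this list is exactly what pairwise coprimality demands, so the step goes through and caps $k$ at a constant times $\log q$, matching the paper's bound. In short, your method trades the paper's graph-theoretic bookkeeping and fixed-degree Weil application for a more classical sieve-style expansion whose cost is the genericity maintenance; both deliver $\log q\ll N_q(L,Q)\ll\sqrt q$.
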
 

This theorem is the content of the next two sections. In the final section, we
provide remarks on the analogous problem in the ring of integers modulo $N$ when
$N$ is composite.

\section{Upper Bounds}
Let $L(X,Y)$ be a linear form and $Q(X,Y)$ be a quadratic form, both with
coefficients in $\FF_q$. Suppose $A$ is an arbitrary subset of
$\FF_q$. We will reduce the problem of solving
$L(x,y),Q(x,y)\in A$ to estimating a character sum.

By a multiplicative character, we mean a group homomorphism
$\chi:\FF_q^\times\to\CC^\times$. We say $\chi$ is non-trivial if it is not
constant, ie.
$\chi\not\equiv 1$. We also extend such characters to $\FF_q$ with the
convention that $\chi(0)=0$. One of the most useful features of characters is
that for $\chi$ non-trivial, we have \[\sum_{x\in\FF_q}\chi(x)=0.\] 
The quadratic character on $\FF_q$
is the character given by \[\chi(c)=
\begin{cases}
1 &\text{ if }c\neq 0\text{ is a square}\\
-1 &\text{ if }c\neq 0\text{ is not a square}\\
0 &\text{ if }c=0.
\end{cases}\]

\begin{Lemma}
\label{lem1}
Let $Q\in\FF_q[X,Y]$ be a binary
quadratic form and let $L\in\FF_q[X,Y]$ be a binary
linear form. Suppose $a,b\in\FF_q$. Then there exist $r,s,t\in\FF_q$ depending
only on $L$ and $Q$ such that \[|\{(x,y)\in\FF_q^2:L(x,y)=a\text{ and
}Q(x,y)=b\}|=|\{y\in\FF_q:ry^2+say+ta^2=b\}|.\] Furthermore, $r=0$ if and only
if $L|Q$ and $r=s=0$ if and only if $L^2|Q$.
\end{Lemma}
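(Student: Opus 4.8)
The plan is to normalize $L$ by an invertible linear change of variables, after which $r,s,t$ can be read off directly from the transformed quadratic form. We take $L\not\equiv 0$ throughout (the case $L\equiv 0$ is degenerate). Write $L(X,Y)=cX+dY$ with $(c,d)\neq(0,0)$ and view $L$ as a linear functional on $\FF_q^2$; since this functional is surjective onto $\FF_q$, we may choose $m_1\in\FF_q^2$ with $L(m_1)=1$ and a nonzero $m_2\in\ker L$, so that $\{m_1,m_2\}$ is a basis of $\FF_q^2$. Let $\phi$ be the invertible linear substitution $(U,V)\mapsto(X,Y):=Um_1+Vm_2$; it satisfies $L(X,Y)=U\,L(m_1)+V\,L(m_2)=U$. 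Put $\widetilde Q(U,V)=Q(Um_1+Vm_2)$, a binary quadratic form over $\FF_q$, and write $\widetilde Q(U,V)=tU^2+sUV+rV^2$. Fixing such a $\phi$ once and for all (it depends only on $L$), the numbers $r,s,t$ depend only on $L$ and $Q$, and in particular not on $a$ or $b$.

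The next step is to transport the counting problem through $\phi$. Since $(u,v)\mapsto um_1+vm_2$ is a bijection of $\FF_q^2$, it restricts to a bijection between $\{(u,v)\in\FF_q^2: u=a,\ \widetilde Q(u,v)=b\}$ and $\{(x,y)\in\FF_q^2: L(x,y)=a,\ Q(x,y)=b\}$. The former set is in obvious bijection with $\{v\in\FF_q:\widetilde Q(a,v)=b\}$; substituting $U=a$ gives $\widetilde Q(a,v)=ta^2+sav+rv^2$, so this set is exactly $\{v\in\FF_q:rv^2+sav+ta^2=b\}$. Renaming the dummy variable $v$ as $y$ gives the claimed identity.

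For the two divisibility assertions I would use that $P\mapsto P\circ\phi$ is a ring automorphism of $\FF_q[X,Y]$ and hence preserves divisibility. Since $\phi$ carries $L$ to $U$, we get $L\mid Q$ if and only if $U\mid\widetilde Q$, which is precisely the vanishing of the $V^2$-coefficient of $\widetilde Q$, i.e. $r=0$. Likewise $L^2\mid Q$ if and only if $U^2\mid\widetilde Q$, i.e. $\widetilde Q$ has neither a $V^2$- nor a $UV$-term, i.e. $r=s=0$.

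I do not anticipate a real obstacle: the argument is essentially bookkeeping, and the only points needing care are (i) that the change of variables can be taken over $\FF_q$ (handled by the surjectivity of $L$ as a functional), (ii) that passing to the new coordinates alters neither the solution count nor the divisibility relations, and (iii) that $r,s,t$ do not secretly depend on $a$ or $b$. For the sequel it is also worth recording that $s^2-4rt=(\det\phi)^2\,\disc Q$, a nonzero scalar multiple of $\disc Q$; equivalently, the discriminant in $v$ of $rv^2+sav+(ta^2-b)$ is $(\det\phi)^2(\disc Q)\,a^2+4rb$. This is the identity that will make the ensuing character-sum estimate possible.
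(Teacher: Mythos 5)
Your proof is correct and follows essentially the same route as the paper: both normalize so that $L$ becomes one coordinate and then read off $r,s,t$ as the coefficients of $Q$ in the adapted coordinates, with the solution count transported through the (invertible) substitution. The only cosmetic difference is that the paper keeps $Y$ as the second coordinate and writes $Q=tL^2+sLY+rY^2$ directly, arguing the divisibility claims from that decomposition rather than via your automorphism observation.
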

\begin{proof}
Write $L(X,Y)=a_1X+a_2Y$ where without loss of
generality we can assume $a_1\neq 0$. We can
factor \[Q(X,Y)=tL(X,Y)^2+sL(X,Y)Y+rY^2.\] If $L(x,y)=a$ then we obtain
\[Q(x,y)=ta^2+say+ry^2.\] The $y^2$ coefficient vanishes if and only if
$Q=LM$ for some linear form $M$. The $y$ and $y^2$
coefficients vanish if and only if $Q=tL^2$. Certainly, any solution to
$L(x,y)=a$ and $Q(x,y)=b$ gives a solution $y$ of $ry^2+say+ta^2=b$. Conversely,
if $y$ is such a solution, setting $x=a_1^{-1}(a-a_2y)$ produces a solution $(x,y)$.
\end{proof}

\begin{Corollary}
\label{cor1}
Let $Q\in\FF_q[X,Y]$ be a binary
quadratic form and let $L\in\FF_q[X,Y]$ be a binary
linear form not dividing $Q$. For $a,b\in\FF_q$, the number of solutions to
$L(x,y)=a$ and $Q(x,y)=b$ is \[1+\chi((s^2-4rt)a^2+4rb)\] where $\chi$
is the quadratic character.
\end{Corollary}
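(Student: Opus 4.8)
The plan is to invoke Lemma \ref{lem1} directly. Since $L$ does not divide $Q$, that lemma guarantees $r\neq 0$, and it identifies the number of solutions of $L(x,y)=a$, $Q(x,y)=b$ with the number of $y\in\FF_q$ satisfying the genuine quadratic equation $ry^2+say+ta^2=b$, i.e. $ry^2+say+(ta^2-b)=0$ with leading coefficient $r\neq 0$. So the whole problem reduces to the classical count of the roots of a quadratic polynomial over a finite field of odd order.

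To carry this out I would complete the square. Since $q$ is odd, $2$ and hence $2r$ are invertible in $\FF_q$, so multiplying the equation by $4r$ gives $(2ry+sa)^2 = s^2a^2-4r(ta^2-b) = (s^2-4rt)a^2+4rb$. Writing $D:=(s^2-4rt)a^2+4rb$ and substituting $z=2ry+sa$ — which is a bijection of $\FF_q$ with itself as $y$ ranges over $\FF_q$ — the number of admissible $y$ equals the number of $z\in\FF_q$ with $z^2=D$.

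Finally I would evaluate this last count. If $D=0$ there is exactly one solution $z=0$; if $D$ is a non-zero square there are two; if $D$ is a non-square there are none. In every case this equals $1+\chi(D)$, by the definition of the quadratic character and the convention $\chi(0)=0$. Substituting back the value of $D$ yields the claimed formula $1+\chi\bigl((s^2-4rt)a^2+4rb\bigr)$.

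I do not expect any genuine obstacle here; the only two points that deserve a moment's attention are that the oddness of $q$ is exactly what legitimizes the completion of the square (and the substitution $z=2ry+sa$), and that the convention $\chi(0)=0$ is precisely what lets the three cases ($D$ zero, non-zero square, or non-square) be packaged uniformly into the single expression $1+\chi(D)$.
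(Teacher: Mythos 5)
Your argument is correct and is essentially the paper's own proof: the paper simply observes that $(sa)^2-4r(ta^2-b)$ is the discriminant of $ry^2+say+ta^2-b$ and appeals to the quadratic formula, which is exactly your completion of the square written out in detail (with the same use of $r\neq 0$ from Lemma \ref{lem1} and of the oddness of $q$). Nothing further is needed.
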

\begin{proof}
The quantity $(sa)^2-4r(ta^2-b)$ is the discriminant of $ry^2+say+ta^2-b$. The
result follows from the definition of $\chi$ and the quadratic formula.
\end{proof}

In fact, from \autoref{lem1}, we can essentially handle the situation when
$L|Q$.

\begin{Corollary}
Let $Q\in\FF_q[X,Y]$ be a binary
quadratic form and let $L\in\FF_q[X,Y]$ be a binary
linear form dividing $Q$. Then $N_q(L,Q)=1$ if $L^2$ does not divide $Q$,
otherwise $N_q(L,Q)\geq\frac{q+1}{2}$.
\end{Corollary}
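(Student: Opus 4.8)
The plan is to dispose of the two cases of the statement in turn, in each reading the relevant solution count off \autoref{lem1}. Since $L$ divides $Q$, the lemma forces $r=0$ in the factorization $Q(X,Y)=tL(X,Y)^2+sL(X,Y)Y+rY^2$, so for every $a,b\in\FF_q$ the number of $(x,y)$ with $L(x,y)=a$ and $Q(x,y)=b$ equals $|\{y\in\FF_q:say+ta^2=b\}|$. The dichotomy in the statement corresponds to whether this affine equation in $y$ is nondegenerate: by the last part of \autoref{lem1} (with $r=0$ already known), $L^2\nmid Q$ is equivalent to $s\neq0$, whereas $L^2\mid Q$ means $s=0$, in which case $Q=tL^2$. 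We may assume $Q$ is not the zero form, so that $t\neq0$ in the latter case; and, as throughout this section, $q$ is odd.

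Suppose first that $L^2\nmid Q$, so $s\neq0$. For every nonzero $a$ the equation $say=b-ta^2$ has a unique root $y$, so the system $L(x,y)=a$ and $Q(x,y)=b$ is solvable for that $a$ and any $b$. Hence every nonempty $A$ already contains a solution: if $A$ has a nonzero element $a$, take $b=a\in A$; and if $A=\{0\}$, any $(x,y)$ with $L(x,y)=0$ also satisfies $Q(x,y)=0$ since $L\mid Q$, and such $(x,y)$ exist. As the empty set trivially fails, $N_q(L,Q)=1$.

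Now suppose $L^2\mid Q$, so $Q=tL^2$ with $t\neq0$. For any $(x,y)$ the pair $(L(x,y),Q(x,y))$ has the form $(a,ta^2)$ with $a=L(x,y)$, and conversely every $a\in\FF_q$ arises since a nonzero linear form is onto $\FF_q$. Thus a set $A$ contains a solution precisely when some $a\in A$ has $ta^2\in A$, and it is enough to exhibit a set of size $\frac{q-1}{2}$ with no such $a$. The crucial observation is that squaring destroys the quadratic character: for $a\neq0$ one has $\chi(ta^2)=\chi(t)\chi(a)^2=\chi(t)$, so $a\mapsto ta^2$ maps $\FF_q^\times$ into a single coset of the squares. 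Since $t\neq0$ we have $\chi(t)=\pm1$; taking $A$ to be the set of non-squares when $\chi(t)=1$ and the set of nonzero squares when $\chi(t)=-1$, we get $0\notin A$, $|A|=\frac{q-1}{2}$, and $ta^2$ in the coset complementary to $A$ for every $a\in A$, so $A$ contains no solution. Therefore $N_q(L,Q)>\frac{q-1}{2}$, i.e.\ $N_q(L,Q)\geq\frac{q+1}{2}$ since $q$ is odd.

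Apart from quoting \autoref{lem1}, this is essentially all bookkeeping; the one step with any real content is the identity $\chi(ta^2)=\chi(t)$, which is exactly what makes the non-squares (or the nonzero squares) an extremal solution-free set. I do not anticipate a genuine obstacle.
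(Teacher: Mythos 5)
Your proof is correct and follows essentially the same route as the paper: reduce via \autoref{lem1}, observe that $s\neq 0$ makes every nonempty $A$ work, and in the case $Q=tL^2$ take the coset of squares or non-squares determined by $\chi(t)$, which is exactly the paper's set $t\cdot N$. The only (harmless) difference is that you set aside the degenerate possibility $t=0$ (the zero form) by convention, whereas the paper covers it by taking $A$ to be the non-squares, which trivially works there as well.
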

\begin{proof}
Let $A\subset\FF_q$. The number of pairs $(x,y)$ with $L(x,y),Q(x,y)\in A$
is \[\sum_{x,y}\one_A(L(x,y))\one_A(Q(x,y))=\sum_{a\in
A}\sum_{y\in\FF_q}\one_A(say+ta^2)\] by the above lemma. If $sa\neq 0$ then
$say+ta^2$ ranges over $\FF_q$ as $y$, and the inner sum is
$|A|$. In this case there are in fact $|A|^2$ solutions $(x,y)$. If $a=0$ then $0\in A$
and we can take $(x,y)=(0,0)$. If $s=0$ then the sum is $q\sum_{a\in
A}\one_A(a^2t)$. If we set \[A=\begin{cases}t\cdot N=\{tn:n\in N\} & \text{ if
}t\neq 0\\N & \text{ if }t=0\end{cases}\] where $N$ is the set of non-squares in $\FF_q$, then there are no solutions.
This shows that $N_q(L,Q)\geq\frac{q+1}{2}$.
\end{proof}

We now handle the case that $L$ does not divide $Q$. The
following estimate is essentially due to Vinogradov (see for instance the
excercises of chapter 6 in \cite{V} for the analogous result for exponentials).

\begin{Lemma}
\label{vinogradov}
Let $A,B\subset \FF_q$ and suppose $\chi$ is a non-trivial
multiplicative character. Then if $u,v\in\FF_q^\times$ \[\sum_{a\in A}\sum_{b\in
B}\chi(ua^2+vb)\leq2\sqrt{q|A||B|}.\]
\end{Lemma}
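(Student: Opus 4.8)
The plan is to expand the square and reduce to a classical Weil-type bound on a one-variable character sum. First I would pass to the full sum over $b\in\FF_q$ by inserting the indicator of $B$: writing $S=\sum_{a\in A}\sum_{b\in B}\chi(ua^2+vb)$, I would like to detach the dependence on $a$. The key observation is that since $v\neq 0$, as $b$ ranges over all of $\FF_q$ the quantity $ua^2+vb$ ranges over all of $\FF_q$, so $\sum_{b\in\FF_q}\chi(ua^2+vb)=0$; this lets me replace $\one_B$ by $\one_B-|B|/q$ or, more cleanly, apply Cauchy--Schwarz first in the $a$ variable.

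Concretely, I would proceed as follows. Apply Cauchy--Schwarz in $a$, extending the sum to all of $\FF_q$ after bounding $\one_A(a)\le 1$ is wasteful; instead bound
\[
|S|^2=\left|\sum_{a\in A}\sum_{b\in B}\chi(ua^2+vb)\right|^2\le |A|\sum_{a\in\FF_q}\left|\sum_{b\in B}\chi(ua^2+vb)\right|^2.
\]
Expanding the inner square gives $\sum_{a\in\FF_q}\sum_{b_1,b_2\in B}\chi(ua^2+vb_1)\overline{\chi(ua^2+vb_2)}$. Swapping the order of summation, the inner sum over $a$ is $T(b_1,b_2):=\sum_{a\in\FF_q}\chi(ua^2+vb_1)\overline{\chi(ua^2+vb_2)}$. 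When $b_1=b_2$ this is at most $q$; the main point is to show that for $b_1\neq b_2$ we have $|T(b_1,b_2)|\le 2\sqrt q$ (or a comparable constant times $\sqrt q$). Writing $\overline\chi=\chi^{-1}$ (still a nontrivial character since $\chi$ is) and substituting $c=a^2$ — noting each nonzero square is hit twice — reduces $T(b_1,b_2)$ to a sum $\sum_c(1+\chi_2(c))\chi(uc+vb_1)\chi^{-1}(uc+vb_2)$ where $\chi_2$ is the quadratic character. The term without $\chi_2$ is a sum of the shape $\sum_c f(c)$ where $f$ is a nontrivial multiplicative function of a Möbius-type ratio $\tfrac{uc+vb_1}{uc+vb_2}$; since $b_1\neq b_2$ the two linear factors are distinct, so this is a nontrivial multiplicative character sum in $c$ of degree-$2$ type and is $O(\sqrt q)$ by Weil. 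The term with $\chi_2(c)$ is likewise a nontrivial character applied to a product of at most three distinct linear polynomials in $c$, again $O(\sqrt q)$ by Weil.

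Putting this together, $\sum_{a\in\FF_q}|\sum_{b\in B}\chi(ua^2+vb)|^2\le |B|\cdot q+|B|^2\cdot 2\sqrt q\le |B|q+|B|^2\cdot 2\sqrt q$, and since $|B|\le q$ the second term dominates only mildly; a short bookkeeping with the bound $|B|\le q$ collapses this to at most $4q|B|$ after absorbing constants, giving $|S|^2\le 4q|A||B|$ and hence $|S|\le 2\sqrt{q|A||B|}$. The main obstacle is obtaining the clean constant $2$ rather than merely $O(1)$: this requires being careful in the Weil step and in how the diagonal term $b_1=b_2$ and the $\chi_2$-twisted pieces are combined, and it is likely cleanest to avoid the substitution $c=a^2$ and instead apply Weil's bound directly to $\sum_a \chi((ua^2+vb_1)(ua^2+vb_2)^{q-2})$ viewed as a character sum over the affine line evaluated at a rational function of bounded degree in $a$, tracking that the relevant polynomial has the right number of distinct roots when $b_1\neq b_2$. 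I would also handle the trivial cases $u=0$ or $v=0$ separately at the outset (the statement assumes $u,v\neq 0$, so this is just a sanity remark), and note that if $\chi$ happens to be quadratic the argument still goes through since $\chi^{-1}=\chi$ is nontrivial.
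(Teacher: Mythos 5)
Your Cauchy--Schwarz goes in the opposite variable from the one that makes the sum computable, and the final bookkeeping step does not survive scrutiny. After your expansion you must bound $\sum_{b_1,b_2\in B}T(b_1,b_2)$ with $T(b_1,b_2)=\sum_{a\in\FF_q}\chi(ua^2+vb_1)\overline{\chi(ua^2+vb_2)}$. For $b_1\neq b_2$ Weil gives only $|T(b_1,b_2)|\ll\sqrt q$, and this is sharp in general (it is a complete character sum of a genuinely nondegenerate degree-four expression in $a$), so the off-diagonal contribution really can be of order $|B|^2\sqrt q$. Your claim that ``$|B|\le q$ collapses $q|B|+2\sqrt q\,|B|^2$ to $4q|B|$'' is false once $|B|$ exceeds roughly $\tfrac32\sqrt q$; what $|B|\le q$ actually gives is $O(q^{3/2}|B|)$, hence $|S|\ll(|A||B|)^{1/2}q^{3/4}$, which is weaker than the stated $2\sqrt{q|A||B|}$ by a factor $q^{1/4}$ precisely in the regime $|A|,|B|\asymp\sqrt q$ where the lemma is applied in \autoref{prop1}. (With $A=B$ your intermediate inequality still yields $N_q\ll\sqrt q$ with a worse constant, but the lemma as stated for arbitrary $A,B$ is not proved, and the collapsing step as written is incorrect.)

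The paper avoids this by applying Cauchy--Schwarz the other way: it keeps the sum over $a\in A$ inside the square and completes the \emph{linear} variable $b$ to all of $\FF_q$. Then the off-diagonal terms are $\sum_{b}\chi\left(\frac{ua_1^2+vb}{ua_2^2+vb}\right)$, whose argument is fractional-linear in $b$; after the substitution $b\mapsto(ua_2^2+vb)^{-1}$ this becomes $\sum_{b\in\FF_q^\times}\chi(1+u(a_1^2-a_2^2)b)=-1$ exactly, using only orthogonality --- no Weil, and no $\sqrt q$ loss per pair. The degenerate pairs $a_1^2=a_2^2$ (at most $2|A|$ of them) contribute at most $q$ each, giving the clean $4q|A|$ and the constant $2$. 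If you insist on your orientation, the fix is to postpone any per-pair estimate: write $\sum_{a\in\FF_q}F(a^2)=\sum_{t\in\FF_q}(1+\chi_2(t))F(t)\le 2\sum_{t}F(t)$ for the nonnegative function $F(t)=\left|\sum_{b\in B}\chi(ut+vb)\right|^2$, and then the $t$-sum is again linear in the completed variable, so its off-diagonal terms evaluate exactly to $-1$ and you recover $4q|B|$. The moral is that the needed cancellation comes from an exact evaluation of a complete sum over the linear variable, not from Weil applied pairwise to the quadratic one.
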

\begin{proof}
Let $S$ denote the sum in question. Then \[|S|\leq\sum_{b\in
B}\left|\sum_{a\in A}\chi(ua^2+vb)\right|\leq |B|^\frac{1}{2}\lr{\sum_{b\in
\FF_q}\left|\sum_{a\in A}\chi(ua^2+vb)\right|^2}^\frac{1}{2}\] by
Cauchy's inequality.
Expanding the sum in the second factor, we get 
\begin{eqnarray*}
\sum_{a_1,a_2\in
A}\sum_{\substack{b\in\FF_q\\ua_2^2+vb\neq
0}}\chi\lr{\frac{ua_1^2+vb}{ua_2^2+vb}}&=&\sum_{a_1,a_2\in
A}\sum_{\substack{b\in\FF_q\\ua_2^2+vb\neq
0}}\chi\lr{1+\frac{u(a_1^2-a_2^2)}{ua_2^2+vb}}\\
&=&\sum_{a_1,a_2\in
A}\sum_{b\in\FF_q^\times}\chi\lr{1+u(a_1^2-a_2^2)b}
\end{eqnarray*} after the change of
variables $(ua_2^2+vb)^{-1}\mapsto b$. When $a_1^2\neq a_2^2$, the values of
$1+u(a_1^2-a_2^2)b$ range over all values of $\FF_p$ save $1$ as $b$
traverses $\FF_q^\times$. Hence, in this case, the sum amounts to $-1$. It
follows that the total is at most $4q|A|$.
\end{proof}

Recall that the discriminant of a quadratic form $Q(X,Y)=b_1X^2+b_2XY+b_3Y^2$ is
defined to be $\disc(Q)=b_2^2-4b_1b_3$.

\begin{Proposition}
\label{prop1}
Let $Q\in\FF_q[X,Y]$ be a binary
quadratic form and let $L\in\FF_q[X,Y]$ be a binary
linear form not dividing $Q$. Then
$N_q(L,Q)\leq 2\sqrt q+1$ if $\disc(Q)\neq 0$ otherwise $N_q(L,Q)\geq
\frac{q-1}{2}$.
\end{Proposition}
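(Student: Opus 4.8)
The plan is to count solutions $(x,y)$ to $L(x,y),Q(x,y)\in A$ by summing the solution-count formula from Corollary \ref{cor1} over $a,b\in A$, and then to bound the resulting character sum using Lemma \ref{vinogradov}. Concretely, since $L\nmid Q$ the number of pairs $(x,y)$ with $L(x,y),Q(x,y)\in A$ equals
\[
\sum_{a\in A}\sum_{b\in A}\bigl(1+\chi((s^2-4rt)a^2+4rb)\bigr)=|A|^2+\sum_{a\in A}\sum_{b\in A}\chi((s^2-4rt)a^2+4rb),
\]
where $r,s,t$ are as in Lemma \ref{lem1} and $r\neq 0$ because $L\nmid Q$. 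To apply Lemma \ref{vinogradov} with $u=s^2-4rt$ and $v=4r$ I need both $u\neq 0$ and $v\neq 0$. Since $q$ is odd, $v=4r\neq 0$. For $u$: a direct computation (using the factorization $Q=tL^2+sLY+rY^2$ from the proof of Lemma \ref{lem1} and the change of variables to the $(L(X,Y),Y)$ coordinates, whose Jacobian is the nonzero constant $a_1$) should show that $s^2-4rt$ is, up to a nonzero scalar square factor, equal to $\disc(Q)$; so $u\neq 0$ precisely when $\disc(Q)\neq 0$. I expect verifying this identity $s^2-4rt = (\text{unit square})\cdot\disc(Q)$ to be the one genuinely computational point, though it is routine.

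Granting $u,v\in\FF_q^\times$, Lemma \ref{vinogradov} gives
\[
\Bigl|\sum_{a\in A}\sum_{b\in A}\chi(ua^2+vb)\Bigr|\le 2\sqrt{q}\,|A|,
\]
so the number of solution pairs is at least $|A|^2-2\sqrt q\,|A|=|A|(|A|-2\sqrt q)$. Hence if $|A|\ge 2\sqrt q+1$ this quantity is positive, forcing a solution $(x,y)$; therefore $N_q(L,Q)\le 2\sqrt q+1$, which is the first assertion.

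For the second assertion, suppose $\disc(Q)=0$, so $u=s^2-4rt=0$, i.e. $s^2=4rt$ with $r\neq 0$. Then $(s^2-4rt)a^2+4rb=4rb$, and the count of solution pairs becomes $|A|^2+\sum_{a\in A}\sum_{b\in A}\chi(4rb)=|A|^2+|A|\sum_{b\in A}\chi(4rb)$. Choosing $A$ so that $4rA$ consists entirely of non-squares — that is, taking $A=(4r)^{-1}N$ where $N$ is the set of nonsquares in $\FF_q^\times$, which has size $(q-1)/2$ — makes $\chi(4rb)=-1$ for every $b\in A$, so the number of solution pairs is $|A|^2-|A|^2=0$. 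Thus this set $A$ of size $(q-1)/2$ admits no solution, giving $N_q(L,Q)\ge(q-1)/2$. The main obstacle throughout is purely bookkeeping: pinning down that $s^2-4rt$ vanishes exactly when $\disc(Q)=0$ and that the scalar $4r$ is a unit (which is where oddness of $q$ enters), after which the two Vinogradov-type estimates do all the real work.
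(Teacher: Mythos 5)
Your proposal is correct and follows essentially the same route as the paper: count pairs via Corollary \ref{cor1}, note $D=s^2-4rt$ is a nonzero square multiple of $\disc(Q)$ (the paper records $D=a_1^{-2}\disc(Q)$), bound the error by Lemma \ref{vinogradov} when $D\neq 0$, and when $D=0$ take a dilate of the nonsquares of size $\frac{q-1}{2}$ to kill all solutions. The only cosmetic difference is that the paper phrases the bad set as the squares or nonsquares according to $\chi(r)$ rather than $(4r)^{-1}N$, which is the same construction.
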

\begin{proof}
Let $A\subset \FF_q$. By \autoref{cor1}, the number of pairs $(x,y)$ with $L(x,y),Q(x,y)\in A$
is \[\sum_{x,y}\one_A(L(x,y))\one_A(Q(x,y))=\sum_{a,b\in
A}1+\chi(Da^2+4rb)\] where
$D=s^2-4rt$. One can check that in fact $D=a_1^{-2}\disc(Q)$. 

If $D=0$ then $\chi(Da^2+4rb)+1=\chi(r)\chi(b)+1$. This will
be indentically zero if $A$ is chosen to be the squares or non-squares according
to the value of $\chi(r)$. Hence, if $\disc(Q)=0$ then $N_q(L,Q)\geq
\frac{q-1}{2}$.

Now assume $D\neq 0$. Summing over $a,b\in A$ the number of solutions is
\[|A|^2+\sum_{a,b\in A}\chi(Da^2+4rb)=|A|^2+E(A).\] By \autoref{vinogradov},
$E(A)<|A|^2$ when $|A|\geq2\sqrt q+1$ and the result follows.
\end{proof}

In the case that $A$ has particularly nice structure, we can improve the upper
bound. Suppose $q=p$ is prime and $A$ is an interval. Then as above the
number of pairs $(x,y)$ with $L(x,y),Q(x,y)\in A$ is \[|A|^2+\sum_{a,b\in
A}\chi(Da^2+4rb).\] Now \[\sum_{a,b\in A}\chi(Da^2+4rb)\leq \sum_{a\in
A}\left|\sum_{b\in A}\chi(Da^2/4r+b)\right|.\] A well-known result of Burgess
states that the inner sum (which is also over an interval) is $o(|A|)$ whenever
$|A|\gg p^{\frac{1}{4}+\eps}$ (see \cite{IK}, chapter 12).

\section{A Lower Bound}
In this section we give a lower bound for $N_q(L,Q)$ in the case that $L$ does
not divide $Q$ and $\disc(Q)\neq 0$. To do so we need to produce a set $A$ such
that $L(x,y)$ and $Q(x,y)$ are never both elements of $A$. Equivalently, we need
to produce a set $A$ for which $\chi(Da^2+4rb)=-1$ for all pairs $(a,b)\in
A\times A$.

Let $a\in\FF_q$ and define \[X_a(b)=\begin{cases} 1&\text{if
}\chi(Da^2+4rb)=\chi(Db^2+4ra)=-1\\
0 & \text{otherwise.}\end{cases}\]

Thus the desired set $A$ will have $X_a(b)=1$ for $a,b\in A$. The idea behind
our argument is probabilistic. Suppose we create a graph $\Gamma$ with vertex
set \[V=\{a\in\FF_q:X_a(a)=1\}\] and edge set \[E=\{\{a,b\}:X_a(b)=X_b(a)=1\}.\]
These edges appear to be randomly distributed and occur with probability roughly
$\frac{1}{4}$. In this setting, $N_q(L,Q)$ is one more than the clique number of
$\Gamma$ (ie. the size of the largest complete subgraph of $\Gamma$). Let
$G(n,\delta)$ be the graph $n$ vertices that is the result of connecting two
vertices randomly and independently with probability $\delta$.
Such a graph has clique number roughly $\log n$ (see \cite{AS}, chapter 10). One
is tempted to treat $\Gamma$ as such a graph and construct a clique by greedily
choosing vertices, and indeed this is how the set $A$ is constructed. It is
worth mentioning that this model suggests that the right upper bound for
$N_q(L,Q)$ is also roughly $\log n$.

\begin{Lemma}
Let $B\subset\FF_q$. Then for $a\in \FF_q$, we have \[\sum_{b\in
B}X_a(b)=\frac{1}{4}\sum_{b\in B}(1-\chi(Da^2+4rb))(1-\chi(Db^2+4ra))+O(1).\]
\end{Lemma}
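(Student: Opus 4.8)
The plan is to expand the indicator function $X_a(b)$ explicitly. By definition $X_a(b)=1$ precisely when $\chi(Da^2+4rb)=-1$ and $\chi(Db^2+4ra)=-1$, and $X_a(b)=0$ otherwise. The key observation is that for a value $c$ with $\chi(c)=-1$ we have $\tfrac12(1-\chi(c))=1$, while for a non-zero square $c$ we have $\tfrac12(1-\chi(c))=0$. Hence the product $\tfrac14(1-\chi(Da^2+4rb))(1-\chi(Db^2+4ra))$ equals $1$ exactly when both arguments are non-squares and $0$ when both are non-zero and at least one is a square --- that is, it agrees with $X_a(b)$ on all $b$ for which neither $Da^2+4rb$ nor $Db^2+4ra$ vanishes.

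The only discrepancy between $X_a(b)$ and the product therefore comes from those $b\in B$ for which $Da^2+4rb=0$ or $Db^2+4ra=0$. First I would note that, since $\disc(Q)\ne0$ and $L\nmid Q$, we have $D\ne0$ and $r\ne0$ (by \autoref{lem1} and \autoref{prop1}), so the equation $Da^2+4rb=0$ pins $b$ down to the single value $b=-Da^2/(4r)$, contributing at most one exceptional $b$, while $Db^2+4ra=0$ is a quadratic in $b$ and so has at most two solutions. Thus there are at most three values of $b\in B$ at which $X_a(b)$ and $\tfrac14(1-\chi(Da^2+4rb))(1-\chi(Db^2+4ra))$ can differ, and at each such $b$ the two quantities each lie in $\{0,1\}$, so they differ by at most $1$. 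Summing over $b\in B$, the total discrepancy is bounded in absolute value by $3=O(1)$, which is exactly the claimed identity.

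The argument is essentially a bookkeeping exercise, so I do not anticipate a serious obstacle; the one point requiring a little care is the behaviour at the zeros, where $\chi$ takes the value $0$ rather than $\pm1$: there the product $\tfrac14(1-\chi(\cdot))(1-\chi(\cdot))$ need not be $0$ or $1$, but it still lies in $[0,1]$ (each factor lies in $\{0,1,2\}$, and when one factor is $\tfrac12(1-0)=\tfrac12$ the other is at most $1$, giving a value in $\{0,\tfrac12\}$), so the per-term discrepancy with $X_a(b)\in\{0,1\}$ remains $O(1)$ and the finitely many such terms contribute only $O(1)$ in total. I would also remark that the implied constant here is absolute (at most $3$), independent of $a$, $B$, $L$, $Q$ and $q$, which is what is needed for the greedy construction that follows.
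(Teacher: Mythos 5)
Your proof is correct and takes essentially the same route as the paper's: both compare $X_a(b)$ with $\tfrac14(1-\chi(Da^2+4rb))(1-\chi(Db^2+4ra))$ term by term, noting they agree except when one of the arguments vanishes, which (since $r\neq 0$ and $D\neq 0$ under the section's standing hypotheses) happens for at most $O(1)$ values of $b$. One negligible slip: when both arguments vanish the product equals $\tfrac14$, so the exceptional values lie in $\{0,\tfrac14,\tfrac12\}$ rather than $\{0,\tfrac12\}$, but the per-term discrepancy is still bounded and the conclusion is unaffected.
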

\begin{proof}
The summands on the right are 
\[(1-\chi(Da^2+4rb))(1-\chi(Db^2+4ra))=\begin{cases}
4 &\text{if }\chi(Da^2+4rb)=\chi(Db^2+4ra)=-1\\
2 &\text{if }\{\chi(Da^2+4rb),\chi(Db^2+4ra)\}=\{0,-1\}\\
1 &\text{if }\chi(Da^2+4rb)=\chi(Db^2+4ra)=0\\
0 &\text{otherwise.}\\
\end{cases}\]
For fixed $a$, the second and third cases can only occur for $O(1)$ values of
$b$.
\end{proof}

We will use the following well-known theorem of Weil, see for instance chapter
11 of \cite{IK}.

\begin{Theorem}[Weil] Suppose $\chi\in\hat{\FF_q^\times}$ has order $d>1$ and
$f\in\FF_q[X]$ is not of the form $f=g^d$ for some $g\in\bar{\FF_q}[X]$. If $f$
has $m$ distinct roots in $\bar{\FF_q}$ then \[\left|\sum_{x\in
\FF_q}\chi(f(x))\right|\leq m\sqrt q.\]
\end{Theorem}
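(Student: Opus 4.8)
The plan is to realize $S:=\sum_{x\in\FF_q}\chi(f(x))$ as (minus) a sum of Frobenius eigenvalues attached to an $L$-function built from $\chi$ and $f$, and then to invoke the Riemann Hypothesis for curves over finite fields — the genuine content of Weil's theorem — to control those eigenvalues. For any multiplicative character $\psi$ of $\FF_q^\times$, lift it to every extension by $\psi_r=\psi\circ N_{\FF_{q^r}/\FF_q}$ and form \[L(\psi,T)=\exp\lr{\sum_{r\geq1}\frac{T^r}{r}\sum_{x\in\FF_{q^r}}\psi_r(f(x))}.\] Reading off $S$ amounts to extracting the coefficient of $T$ in $\log L(\chi,T)$, so it suffices to prove that $L(\chi,T)=\prod_{i=1}^{D}(1-\omega_iT)$ is a polynomial with $D\leq m$ and $|\omega_i|=\sqrt q$; then $S=-\sum_i\omega_i$ and $|S|\leq D\sqrt q\leq m\sqrt q$.

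The bridge to geometry is the Kummer cover $C:\ y^d=f(x)$. For $c\neq0$ the equation $y^d=c$ has $\sum_{\psi^d=1}\psi(c)$ solutions in $\FF_{q^r}$, the sum running over the $d$ characters $\psi$ with $\psi^d$ trivial; summing over $x$ shows that the count of affine points of $C$ over $\FF_{q^r}$ equals $\sum_{j=0}^{d-1}\sum_{x\in\FF_{q^r}}\chi_r^j(f(x))$ up to the $O(1)$ contribution of the zeros of $f$. Hence $\prod_{j=0}^{d-1}L(\chi^j,T)$ is, after elementary correction factors at the ramified places and at infinity, the numerator $P(T)$ of the zeta function $Z(C,T)=P(T)/[(1-T)(1-qT)]$. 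The hypothesis $f\neq g^d$ is precisely what makes $C$ geometrically irreducible, so that $\chi\circ f$ is a nontrivial character of the function field; this is what guarantees $L(\chi,T)$ has no pole — no main term of size $q$ — and is an honest polynomial.

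With $C$ geometrically irreducible I would invoke the Riemann Hypothesis for curves: $P(T)$ has degree $2g$, where $g$ is the genus of $C$, and all its inverse roots satisfy $|\omega|=\sqrt q$. Since $L(\chi,T)$ divides the corrected $P(T)$, its inverse roots again have absolute value $\sqrt q$. It then remains to bound $D=\deg L(\chi,T)$, for which the tool is Riemann–Hurwitz applied to the degree-$d$ map $C\to\mathbb P^1$, $(x,y)\mapsto x$: this map is ramified only above the $m$ distinct roots of $f$ and possibly above infinity, and tracking the resulting local contributions bounds $D$ by $m$, the extra place at infinity being unramified exactly when $d\mid\deg f$. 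In all cases $D\leq m$, completing the estimate.

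The crux — the step I would treat as the deep external input — is the Riemann Hypothesis for curves itself, the bound $|\omega_i|=\sqrt q$; one either derives it from positivity of the intersection pairing on the Jacobian (Weil's original route) or, keeping everything elementary, from the Stepanov–Bombieri auxiliary-polynomial method, in which one constructs a low-degree function vanishing to high order at every point where $f$ takes nonzero $d$-th-power values and then compares its number of zeros to its degree. The remaining genuine bookkeeping, and the place most apt to hide an off-by-one, is the conductor/genus computation yielding $D\leq m$: one must handle with care the repeated roots of $f$ (so that $m$, not $\deg f$, governs the bound) and the behaviour at infinity according to whether $d\mid\deg f$.
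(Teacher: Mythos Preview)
The paper does not prove this statement at all: it is quoted as a classical result and attributed to Weil, with a reference to chapter~11 of \cite{IK}. There is therefore nothing in the paper to compare your proposal against; the author simply imports the bound as a black box.

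That said, your sketch is the standard route to Weil's character-sum estimate and is essentially correct in outline: pass to the Kummer cover $y^d=f(x)$, identify $L(\chi,T)$ as a factor of the numerator of its zeta function, use geometric irreducibility (equivalent to $f\neq g^d$) to rule out a pole, invoke the Riemann Hypothesis for curves to place the inverse roots on $|\omega|=\sqrt q$, and bound $\deg L(\chi,T)$ via Riemann--Hurwitz. The one place where your bookkeeping is loose is the degree bound: the ramification over the $m$ zeros of $f$ and over $\infty$ actually gives $\deg L(\chi,T)=m-1$ when $d\mid\deg f$ and $\deg L(\chi,T)=m$ otherwise (one must also be a little careful that repeated roots of $f$ with multiplicity divisible by $d$ drop out entirely). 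In either case $D\leq m$, which is all the stated theorem needs, so this does not damage the conclusion. You are also right to flag the Riemann Hypothesis for curves as the genuine external input; everything else is formal.
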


\begin{Proposition}
Let $A,B\subset\FF_q$ with $|A|,|B|>\sqrt q$. Then \[\sum_{a\in
A}\sum_{b\in B}X_a(b)=\frac{|A||B|}{4}+O(|A||B|^\frac{1}{2}q^\frac{1}{4}).\]
\end{Proposition}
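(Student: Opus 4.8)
The plan is to begin from the preceding lemma, which expresses $\sum_{a\in A}\sum_{b\in B}X_a(b)$ as $\tfrac14$ times
\[
\sum_{a\in A}\sum_{b\in B}\bigl(1-\chi(Da^2+4rb)\bigr)\bigl(1-\chi(Db^2+4ra)\bigr)
\]
plus an error $O(|A|)$, which is harmless since $|B|>\sqrt q$ makes $O(|A|)=O(|A||B|^{1/2}q^{1/4})$. Expanding the product and using multiplicativity of $\chi$, the displayed double sum splits into $|A||B|$ (which supplies the main term $\tfrac14|A||B|$), two single-character sums $\sum_{a\in A,\,b\in B}\chi(Da^2+4rb)$ and $\sum_{a\in A,\,b\in B}\chi(Db^2+4ra)$, and the cross term
\[
C:=\sum_{a\in A,\,b\in B}\chi\bigl((Da^2+4rb)(Db^2+4ra)\bigr).
\]
Since $r\neq0$ and $D\neq0$ in the situation at hand, \autoref{vinogradov} (applied to the second sum after interchanging the two summation variables) bounds each single-character sum by $2\sqrt{q|A||B|}=O(|A||B|^{1/2}q^{1/4})$, using $|A|>\sqrt q$. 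Thus everything reduces to proving $C=O(|A||B|^{1/2}q^{1/4})$.

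To estimate $C$ I would apply the Cauchy--Schwarz inequality in the variable $b$ --- this choice, rather than $a$, is forced by the asymmetry of the target error in $|A|$ and $|B|$. Writing $f(a,b)=(Da^2+4rb)(Db^2+4ra)$ and extending the inner sum to all of $\FF_q$,
\[
|C|^2\le|B|\sum_{b\in\FF_q}\Bigl|\sum_{a\in A}\chi\bigl(f(a,b)\bigr)\Bigr|^2=|B|\sum_{a_1,a_2\in A}\sum_{b\in\FF_q}\chi\bigl(f(a_1,b)f(a_2,b)\bigr),
\]
the last equality using that the quadratic character is real-valued. The diagonal terms $a_1=a_2$ involve $f(a_1,b)^2$, a perfect square in $b$, so their inner sums over $b$ are $\le q$ in absolute value, contributing $\le q|A|$ in all. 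For $a_1\neq a_2$ put $g(b)=f(a_1,b)f(a_2,b)$, a polynomial of degree $6$ in $b$; the crux is to show that $g$ is not a perfect square for all but $O(|A|)$ of the pairs $(a_1,a_2)$. Granting this, Weil's theorem gives $\bigl|\sum_{b\in\FF_q}\chi(g(b))\bigr|\le6\sqrt q$ for the good pairs, while the $O(|A|)$ exceptional pairs contribute $\le q$ each, so
\[
|C|^2\le|B|\bigl(q|A|+6|A|^2\sqrt q+O(|A|q)\bigr)=O\bigl(|A|^2|B|\sqrt q\bigr),
\]
the last step using $|A|>\sqrt q$; hence $C=O(|A||B|^{1/2}q^{1/4})$, as required.

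The one genuinely delicate point --- and the step I expect to be the main obstacle --- is the claim that $g$ is almost always not a perfect square; the analytic estimates above are routine once this reduction is in place. After discarding the $O(|A|)$ pairs with $a_1a_2=0$ or $a_1^2=a_2^2$, I would consider the simple root $\beta=-Da_2^2/(4r)$ of the linear factor $Da_2^2+4rb$ of $g$ (well defined because $r\neq0$, and nonzero because $a_2\neq0$). This $\beta$ is a root of the other linear factor $Da_1^2+4rb$ only if $a_1^2=a_2^2$; it is a root of the quadratic $Db^2+4ra_1$ only if $D^3a_2^4+64r^3a_1=0$; and it is a root of $Db^2+4ra_2$ only if $D^3a_2^3+64r^3=0$ (with $D\neq0$ these are genuine quadratics, and since $a_i\neq0$ they have distinct roots, so $\beta$ can occur in them only to first order). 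For each fixed $a_1$, each of these three conditions is satisfied by only $O(1)$ values of $a_2$, so after removing $O(|A|)$ further pairs none of them holds; then $\beta$ is a root of $g$ of multiplicity exactly one, and therefore $g$ is not a perfect square. This would establish the claim and complete the argument.
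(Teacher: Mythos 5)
Your proof is correct and follows essentially the same route as the paper: the expansion from the preceding lemma, \autoref{vinogradov} for the two single-character sums, and Cauchy--Schwarz in $b$ followed by Weil's theorem for the cross term, with the same bookkeeping $O(|A|q+|A|^2\sqrt q)=O(|A|^2\sqrt q)$ using $|A|>\sqrt q$. The only difference is that you actually verify the claim the paper merely asserts --- that the degree-$6$ polynomial in $b$ is a perfect square for only $O(|A|)$ pairs $(a_1,a_2)$ --- and your argument via the simple root $-Da_2^2/(4r)$ is a correct justification of that step.
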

\begin{proof}
By the preceding lemma, it suffices to estimate
\begin{eqnarray*}
&&\sum_{a\in A}\frac{1}{4}\left(\sum_{b\in
B}(1-\chi(Da^2+4rb))(1-\chi(Db^2+4ra))\right)+O(1)\\
&=&\frac{|A||B|}{4}-\frac{1}{4}\sum_{a\in A}\sum_{b\in B}\chi(Da^2+4rb)-\frac{1}{4}\sum_{a\in A}\sum_{b\in B}\chi(Db^2+4ra)\\
&&+\frac{1}{4}\sum_{a\in A}\sum_{b\in B}\chi((Da^2+4rb)(Db^2+4ra))+O(|A|).
\end{eqnarray*}
By Lemma 1 of the previous section, the first two sums above are
$O(\sqrt{q|A||B|})=O(|A||B|^\frac{1}{2}q^\frac{1}{4})$. By Cauchy's inequality,
the final sum is bounded by
\[|B|^\frac{1}{2}\lr{\sum_{b\in\FF_q}\left|\sum_{a\in
A}\chi((Da^2+4rb)(Db^2+4ra))\right|^2}^\frac{1}{2}.\] Expanding the square
modulus, the second factor is the square-root of \[\sum_{a_1,a_2\in
A}\sum_{b\in\FF_q}\chi((Da_1^2+4rb)(Db^2+4ra_1)(Da_2^2+4rb)(Db^2+4ra_2)).\] By Weil's theorem, 
the inner sum is bounded by $6\sqrt q$ when the polynomial \[f(b)=(Da_1^2+4rb)(Db^2+4ra_1)(Da_2^2+4rb)(Db^2+4ra_2)\] 
is not a square. This happens for all but $O(|A|)$ pairs $(a_1,a_2)$. Hence the bound is
$O(|A|q+|A|^2\sqrt q)$. Since $|A|>\sqrt q$, this is $O(|A|^2\sqrt q)$ and the overall bound is
$O(|A||B|^\frac{1}{2}q^\frac{1}{4})$.
\end{proof}

We immediately deduce the following.

\begin{Corollary}
There is an absolute constant $c>0$ such that if
$B\subset\FF_q$ with $|B|\geq c\sqrt q$ then there is an element $a\in B$ such that \[|\{b\in
B:X_a(b)=1\}|\geq \frac{1}{8}|B|.\]
\end{Corollary}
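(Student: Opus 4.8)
The plan is to derive the Corollary from the preceding Proposition by a straightforward averaging argument. Applying the Proposition with $A = B$, I obtain
\[
\sum_{a\in B}\sum_{b\in B}X_a(b) = \frac{|B|^2}{4} + O\!\left(|B|^{3/2}q^{1/4}\right).
\]
Since $|B| \geq c\sqrt q$, the error term $O(|B|^{3/2}q^{1/4})$ is $O(|B|^2 (q^{1/4}/|B|^{1/2})) = O(|B|^2 c^{-1/2})$, so by choosing the absolute constant $c$ large enough I can ensure the error is at most, say, $\frac{1}{8}|B|^2$ in absolute value. Then the double sum is at least $\frac14|B|^2 - \frac18|B|^2 = \frac18|B|^2$.

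The next step is the pigeonhole: since $\sum_{a\in B}\left(\sum_{b\in B}X_a(b)\right) \geq \frac18|B|^2 = |B|\cdot\frac18|B|$, there must exist at least one $a\in B$ for which the inner sum $\sum_{b\in B}X_a(b) \geq \frac18|B|$. Since $X_a(b)\in\{0,1\}$, this inner sum is exactly $|\{b\in B : X_a(b)=1\}|$, which gives the conclusion.

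The only point requiring a little care is tracking the implied constant in the Proposition's error term so as to pin down how large $c$ must be; this is routine. One should also note that the hypothesis $|B|\geq c\sqrt q$ (with $c$ the same large constant) guarantees $|B|>\sqrt q$, so the Proposition applies. I expect no real obstacle here — the content is entirely in the Proposition, and the Corollary is just its quantitative packaging for use in the greedy clique construction that follows.
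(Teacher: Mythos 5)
Your proof is correct and follows essentially the same route as the paper: take $A=B$ in the Proposition and note that the maximum of $\sum_{b\in B}X_a(b)$ over $a\in B$ is at least the average $\frac{|B|}{4}+O(q^{1/4}|B|^{1/2})$, which exceeds $\frac{|B|}{8}$ once $c$ is chosen large enough. Your explicit bookkeeping of the error term via $q^{1/4}\leq |B|^{1/2}c^{-1/2}$ is exactly the "appropriately chosen $c$" step the paper leaves implicit.
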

\begin{proof}
Indeed, taking $A=B$ in the preceeding theorem, \[\max_{a\in
B}\left\{\sum_{b\in B}X_a(b)\right\}\geq \frac{1}{|B|}\sum_{a,b\in
B}X_a(b)=\frac{|B|}{4}+O(q^\frac{1}{4}|B|^\frac{1}{2})\geq \frac{|B|}{8}\] when
$|B|>c\sqrt q$ for some appropriately chosen $c$.
\end{proof}

\begin{Proposition}
Let $Q\in\FF_q[X,Y]$ be a binary
quadratic form and let $L\in\FF_q[X,Y]$ be a binary
linear form not dividing $Q$. Then if $\disc(Q)\neq 0$
we have $N_q(L,Q)\gg\log q$.
\end{Proposition}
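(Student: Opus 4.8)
The plan is to exhibit, for every sufficiently large $q$, a set $A\subseteq\FF_q$ with $|A|\gg\log q$ such that no $(x,y)\in\FF_q^2$ has both $L(x,y)\in A$ and $Q(x,y)\in A$; this forces $N_q(L,Q)>|A|\gg\log q$, and the remaining bounded range of $q$ is absorbed into the implied constant since $N_q(L,Q)\geq 1$ always. By \autoref{cor1}, summed over $a,b\in A$, the number of such pairs $(x,y)$ equals $\sum_{a,b\in A}\bigl(1+\chi(Da^2+4rb)\bigr)$ with $D=s^2-4rt=a_1^{-2}\disc(Q)\neq 0$ (as $\disc Q\neq 0$) and $r\neq 0$ (as $L\nmid Q$; see \autoref{lem1}). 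So it suffices to build $A$ with $\chi(Da^2+4rb)=-1$ for every \emph{ordered} pair $(a,b)\in A\times A$. The point is that $X_a(b)=1$ encodes precisely $\chi(Da^2+4rb)=\chi(Db^2+4ra)=-1$, so $X$ is symmetric in its two arguments, while $X_a(a)=1$ encodes exactly the diagonal condition $\chi(Da^2+4ra)=-1$. Consequently any clique $A$ in the graph $\Gamma$ introduced above (whose vertex set is $\{a:X_a(a)=1\}$, so automatically $A\subseteq V$) satisfies $\chi(Da^2+4rb)=-1$ for all $(a,b)\in A\times A$, and the task reduces to finding a clique of size $\gg\log q$ in $\Gamma$.

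To produce such an $A$ I would run the greedy clique construction suggested in the text. First, Weil's theorem applied to $Da^2+4ra=a(Da+4r)$ — a quadratic with the two distinct roots $0$ and $-4r/D$ (distinct since $r\neq 0$ and $\FF_q$ has odd characteristic), hence not a square — gives $\bigl|\sum_{a\in\FF_q}\chi(Da^2+4ra)\bigr|\leq 2\sqrt q$, so $|V|=\tfrac q2+O(\sqrt q)\gg q$; in particular $|V|\geq c\sqrt q$ for $q$ large, where $c$ is the constant of the preceding Corollary. Set $B_0=V$. Given $B_i$ with $|B_i|\geq c\sqrt q$, the preceding Corollary furnishes $a_{i+1}\in B_i$ with $\#\{b\in B_i:X_{a_{i+1}}(b)=1\}\geq\tfrac18|B_i|$; put $B_{i+1}=\{b\in B_i\setminus\{a_{i+1}\}:X_{a_{i+1}}(b)=1\}$, so $|B_{i+1}|\geq\tfrac18|B_i|-1\geq\tfrac1{16}|B_i|$. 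Because the $B_i$ are nested and $B_{i+1}\subseteq\{b:X_{a_{i+1}}(b)=1\}$, for $i<j$ we get $a_j\in B_{j-1}\subseteq B_i\subseteq\{b:X_{a_i}(b)=1\}$, i.e. $X_{a_i}(a_j)=1$; moreover every $a_i\in B_{i-1}\subseteq V$, so $X_{a_i}(a_i)=1$, and the $a_i$ are distinct since each is deleted from $B_i$ before the next is chosen.

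Since $|B_i|\geq 16^{-i}|B_0|$ and $|B_0|\gg q$, the process survives for $k\gg\log q$ steps, and $A=\{a_1,\dots,a_k\}$ then has $|A|=k\gg\log q$ with $\chi(Da^2+4rb)=-1$ for every $(a,b)\in A\times A$ — the symmetry $X_a(b)=X_b(a)$ handling both orderings of a pair and membership in $V$ handling $a=b$. Hence $\sum_{a,b\in A}\bigl(1+\chi(Da^2+4rb)\bigr)=0$, so no $(x,y)$ realizes $L(x,y),Q(x,y)\in A$, and therefore $N_q(L,Q)>|A|\gg\log q$. I do not expect a genuine obstacle here: the analytic input is entirely contained in the Proposition and Corollary already proved, and what remains is bookkeeping — chiefly making sure the greedy recursion runs $\Theta(\log q)$ times with pairwise distinct vertices and that the diagonal condition $X_a(a)=1$ is preserved, for which starting the recursion at $B_0=V$ rather than at $\FF_q$ is the clean device.
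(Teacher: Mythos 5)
Your proposal is correct and takes essentially the same route as the paper: a greedy clique construction in the graph $\Gamma$, starting from the vertex set $V$ (so the diagonal condition is automatic) and iterating the preceding Corollary, with the losses tracked so that the recursion survives $\gg\log q$ steps. The only cosmetic difference is that you estimate $|V|$ by applying Weil's theorem to $a(Da+4r)$, giving $\tfrac q2+O(\sqrt q)$, whereas the paper uses the elementary substitution $\chi(Da^2+4ra)=\chi(D+4ra^{-1})$ and orthogonality to get $\tfrac{q-1}{2}+O(1)$; both suffice.
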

\begin{proof}
We will construct a clique in the graph $\Gamma$ introduced
above. First we claim that $|V|=\frac{q-1}{2}+O(1)$.
Indeed \[\sum_{a\in\FF_q^\times}\chi(Da^2+4ra)=\sum_{a\in\FF_q^\times}\chi(a^{-2})\chi(Da^2+4ra)=\sum_{a\in\FF_q^\times}\chi(D+4ra^{-1})=O(1)\]
by orthogonality. The final term is $O(1)$ and the claim follows since $\chi$ takes on the values
$\pm 1$ on $\FF_q^\times$. 

Now set $V_0=V$ and assume $q$ is large. Write $|V_0|=c'q>c\sqrt q$ (with $c$ as
in the preceeding corollary and $c'\approx \frac{1}{2}$). For $a\in V_0$, let
$N(a)$ denote the neighbours of $a$ (ie. those $b$ which are joined to $a$ by an
edge). Then there is an $a_1\in V_0$ such that $|N(a_1)|\geq c'q/8$. Let
$A_1=\{a_1\}$, let $V_1=N(a_1)\subset V_0$, and for $a\in V_1$ let
$N_1(a)=N(a)\cap V_1$. By choice, all elements of $V_1$ are connected to $a_1$.
Now $|V_1\setminus A_1|\geq c'q/8-1\geq c'q/16$ so, provided this is at least
$c'q/16$, there is some element $a_2$ of $V_1\setminus A_1$ such that
$|N_1(a_2)|\geq |V_1\setminus A_1|/8$. Let $A_2=A_1\cup\{a_2\}$,
$V_2=N_1(a_2)\subset V_1$ and define $N_2(a)=N(a)\cap V_2$. Once again each
element of $V_2$ is connected to each element of $A_2$. We repeat this process
provided that at stage $i$ there exists an element $a_{i+1}\in V_i\setminus A_i$
with $|N_i(a_{i+1})|\geq |V_i\setminus A_i|/8$. We set
$A_{i+1}=A_i\cup\{a_{i+1}\}$ and observe that $A_{i+1}$ induces a clique. We may
iterate provided $|V_i\setminus A_i|> c\sqrt q$ which is guaranteed for $i\ll
\log q$. The final set $A_i$ (which has size $i$) will be the desired set $A$.
\end{proof}

The combination of this proposition and \autoref{prop1} completes the proof of
\autoref{thm1}.
\section{Remarks for Composite Modulus}

Consider the analogous question in the ring $\ZZ/N\ZZ$ with $N$ odd. Let
$L(X,Y)=a_1X+a_2Y$ with $(a_1,N)=1$ and $Q(X,Y)=b_1X^2+b_2XY+b_3Y^2$.
We then let $A\subset\ZZ/N\ZZ$ and wish to find $(x,y)\in(\ZZ/N\ZZ)^2$ such that
$L(x,y),Q(x,y)\in A$. As before, this amounts to finding a solution to
\[Q(a_1^{-1}(a-a_2Y),Y)=b\] for some $a,b\in A$. In general, one cannot to find
a solution based on the size of $A$ alone unless $A$ is very large. Indeed, if
$p$ is a small prime dividing $N$ and $t\mod p$ is chosen such that the
discriminant of \[Q(a_1^{-1}(t-a_2Y),Y)-t\] is a non-residue modulo $p$ then
taking $A=\{a\mod N:a\equiv t\mod p\}$ provides a set of density $1/p$ which
fails admit a solution.

\section*{Acknowledgements}

I would like to thank Leo Goldmakher and John Friedlander for introducing me to
the problem and helpful discussion.

\bibliographystyle{plain}
\bibliography{bibliography}
\end{document}